\newtheorem{problem}{Problem}
\newtheorem{lemma}{Lemma}
\newtheorem{remark}{Remark}
\newtheorem{assumption}{Assumption}
\newtheorem{theorem}{Theorem}
\renewcommand{\vec}{\mathrm{vec}}
\newcommand{\rank}{\mathrm{rank}}
\newcommand{\ff}[1]{{\color{black}#1}}
\begin{document}

\begin{frontmatter}

\title{On Reconstructability of Quadratic Utility Functions from the Iterations in Gradient Methods\tnoteref{t1}} 

\tnotetext[t1]{The work of I.~Shames is supported by McKenzie Fellowship. The work of F.~Farokhi is supported by the Australian Research Council (LP130100605). }

\author[add1]{Farhad~Farokhi\corref{cor}}
    \ead{farhad.farokhi@unimelb.edu.au}
    
\author[add1]{Iman~Shames}
    \ead{iman.shames@unimelb.edu.au}
    
\author[add2]{Michael~G.~Rabbat}
	\ead{michael.rabbat@mcgill.ca}
	
\author[add3]{Mikael Johansson}
	\ead{mikaelj@ee.kth.se}
  
\cortext[cor]{Corresponding author}

\address[add1]{Department of Electrical and Electronic Engineering,  University of Melbourne, Parkville, Australia}

\address[add2]{Department of Electrical and Computer Engineering, 
McGill University, Montr\'{e}al, Qu\'{e}bec, Canada}

\address[add3]{ACCESS Linnaeus Center, Electrical Engineering, KTH Royal Institute of Technology,
Stockholm, Sweden}

\begin{abstract}
In this paper, we consider a scenario where an \emph{eavesdropper} can read the content of messages transmitted over a network. The nodes in the network are running a gradient algorithm to optimize a quadratic utility function where such a utility optimization is a part of a decision making process by an \emph{administrator}. We are interested in understanding the conditions under which the eavesdropper can reconstruct the utility function or a scaled version of it and, as a result, gain insight into the decision-making process. We establish that if the parameter of the gradient algorithm, i.e.,~the step size, is chosen appropriately, the task of reconstruction becomes practically impossible for a class of Bayesian filters with uniform priors. We establish what step-size rules should be employed to ensure this.

\end{abstract}

\begin{keyword}
Bayesian Inference, Privacy, Data Confidentiality, Gradient Algorithm
\end{keyword}

\end{frontmatter}

\section{Introduction}
In recent decades, tremendous advances in the areas of communication and computation have facilitated the construction of complex systems. The design and analysis of these systems involve solving large optimization problems.
Utility maximization, optimal flow, expenditure minimization, and traffic optimization are examples of such problems. Due to the size of these problems, it is often required  that problems are solved over a network of interconnected processors. In many scenarios, the implementation of the solution to the optimization problem is in the public domain. However, from an operational point of view, it is important that the way that the decision is made remains confidential. In other words, while the optimal decision can be known by everyone, the utility function itself should remain confidential. Portfolios in portfolio optimization and local utilities in resource allocation can be considered as examples of such utility functions that need to be kept confidential. This especially becomes an issue as more computations related to operating the critical infrastructure (e.g.~power distribution networks) are carried out in the cloud~\cite{akyol2012cyber}. The importance of confidentiality, integrity, and availability are well understood in the security of data and ICT services~\cite{bishop2002computer} and cloud computing~\cite{chen2010s}. In these settings, confidentiality corresponds to ensuring the non-disclosure of data, integrity is related to the trustworthiness of data,  and availability is concerned with the timely access to the data or system functionalities.

In this paper, we mainly focus on the question of confidentiality---particularly, the confidentiality of the utility functions even when the security of the network is compromised and an eavesdropper can listen to all the information being exchanged over the network during the course of solving the optimization problem. We consider scenarios where the utility function has a quadratic form. Specifically, the following question is answered: \emph{when is it possible to reconstruct a utility function, or a scaled version of it, via having access to the iterations produced by an iterative method?}  The iterative method considered in this paper is a gradient ascent algorithm. The choice of a gradient algorithm is inline with the recent observations that cast a favourable light on employing first-order methods to solve very large optimization problems \cite{cevher2014convex}. Note that
the choice of quadratic programs is not very restrictive as  trust-region optimization techniques allow us to solve any general optimization problem using a sequence of constrained quadratic programs recursively~\cite{wright1999numerical}.

The problem that is addressed here is related to the one considered in the context of differential privacy \cite{dwork2008differential} and, to a larger extent, the application of differential privacy in optimization~\cite{Gupta2010,Chaudhuri2011,Mangasarian2011}. However, it is important to note that there, the price for guaranteed confidentiality is paid in terms of data integrity and the accuracy of the solution.  To ensure differential privacy, it is known that the information passed between the processing nodes at each step of the optimization algorithm should be perturbed by a random variable from a Laplace distribution \cite{dwork2008differential}. This results in the algorithm not yielding an accurate  solution. Here, we argue that the confidentiality of the objective (but not the solution) can be guaranteed in practice with no impact on the accuracy of the solution, if the algorithm parameter (the step size) is chosen appropriately, i.e.,~it is picked randomly from a sufficiently large set of suitable step sizes. In addition to differential privacy, other notions of privacy in optimization and machine learning have recently been pursued, e.g., see~\cite{weeraddanaper,Vaidya2008,NIPS2012_4505}. Note that, in this paper, we are not directly contributing to the privacy-preserving literature, \ff{\textit{per se}}. Our main objective is point out that, in the setups discussed, one does not need to worry about privacy since estimating the underlying parameters is practically impossible due computational restrictions (at least, with current technologies). Note that this problem is also related to the system identification and the parameter estimation literature, where the aim is to extract the parameters of the underlying utility or dynamics. However, in our setup, the eavesdropper cannot inject proper reference signals to fully probe the system (that is commonly known as the \textit{persistent excitation} and is necessary for achieving the estimation objective~\cite{1103142}). Finally, in~\cite{gentry2001dynamic}, the agents use their actions to learn about the strategies or the utilities of the other agents to subsequently devise optimal strategies. However, in that study, the computational aspects of the problem were largely unexplored and only linear programs were considered.

The outline of this paper is as follows. In the next section, the problem that is considered in this paper is formulated. In Section \ref{sec:qp_unc}, we consider the case where the eavesdropper has access to the iterates that are generated during the course of solving an unconstrained quadratic program. In this section, different choices of the step size are considered and conditions for which the utility function cannot be constructed successfully are discussed. Next, in Section \ref{sec:qp_c}, we consider the case where the problem is constrained. Concluding remarks are given in Section \ref{sec:conc}.

\subsection{Notation} The sets of reals, nonnegative reals, integers, and nonnegative integers are, respectively, denoted by $\mathbb{R}$, $\mathbb{R}_{\geq 0}$, $\mathbb{Z}$, and $\mathbb{Z}_{\geq 0}$. The rest of the sets are denoted by calligraphic Roman letters, such as $\mathcal{M}$. Specifically, $\mathcal{S}_+^n$ is defined to be the set of symmetric positive-definite matrices in $\mathbb{R}^{n\times n}$. We define $\vec:\mathbb{R}^{n\times m}\rightarrow \mathbb{R}^{nm}$ to be a vectorization operator that puts all the columns of a matrix into a vector sequentially. Finally, we use $A\otimes B$ to be the Kronecker product of matrices $A$ and $B$.

\section{Problem Formulation}\label{sec:prob_form}
Consider the following optimization problem:
\begin{subequations} \label{eq:opt}
\begin{align}
\max_{x\in\mathbb{R}^n}\quad & -\dfrac{1}{2} x^\top Q x - q^\top x,\\
\text{s.t.} \quad & \ff{Cx \leq d},
\end{align}
\end{subequations}
where $Q\in \mathcal{S}_+^{n}$, $q\in \mathbb{R}^n$, $\ff{C}\in \mathbb{R}^{m\times n}$, $\ff{d}\in\mathbb{R}^m$, and $\mathcal{X}\triangleq\{x\in\mathbb{R}^n\,|\,\ff{Cx<d}\}\neq \emptyset$. The optimization problem \eqref{eq:opt} is solved by an \emph{administrator} over a network via an optimization method, $\mathcal{F}(\cdot)$, given by 
\begin{equation} \label{eqn:algorithmdef}
x[k+1]=\mathcal{F}(x[k]),\,x[0]\in\mathcal{X}.
\end{equation}
Throughout this paper, we assume that $\mathcal{F}(\cdot)$ is the gradient ascent algorithm in which different step-size selection methods can be used. This assumption is, partly, motivated by favourable results on first-order methods for solving large-scale optimization problems \cite{cevher2014convex}. However, this assumption is also in place to greatly simplify the proofs and the presentation.

\begin{remark} At first glance, the update rule in~\eqref{eqn:algorithmdef} appears to be a centralized implementation. However, distributed algorithms using primal decomposition as well as the inner problems for distributed algorithms using dual decomposition (see~\cite{bertsekas1997parallel}) can be both rewritten, albeit in an aggregated form, in the form of~\eqref{eqn:algorithmdef}. 
\end{remark}

\begin{remark} The results presented in this work, at least in part, are applicable to more general utility functions, e.g., logarithmic functions. However, the selection of the quadratic utility functions results in linear operators that greatly simplifies the proofs. Moreover, the quadratic utility functions, although partially conservative, have many applications and are widely used in signal processing, e.g., weighted least squares, and machine learning, e.g. support vector machines (SVM) \cite{Vaidya2008}. 
\end{remark}

The measurement model of the eavesdropper is as follows. For any two consecutive measurements of the optimization variable $x[k]$ and $x[k+1]$, for some $k\in\mathbb{Z}_{\geq 0}$, the eavesdropper can construct a measurement of the form
\begin{align}\label{eq:eaves_model}
y[k]=\ff{x[k]-x[k+1]}.
\end{align}
Therefore, at time step $k+1$, the eavesdropper has access to measurement pairs $(x[t],y[t])_{t=0}^k$.  Providing the solution to the following problems is of interest. 
\begin{problem}[Utility Function Reconstruction]\label{prob:main}
Assuming that the eavesdropper can measure $x[k]$ for all $k$ and the values of $A$ and $b$ are known, under what conditions on the step size selection of the gradient descent algorithms can the eavesdropper estimate $(\hat{Q},\hat{q})$ such  that $Q = \gamma \hat{Q}$ and $q = \gamma\hat{q}$ for some $\gamma>0$?
\end{problem}
Solving the problem above enables the eavesdropper to determine the way that decisions are made. For example, it can be determined which variable has a bigger impact on the solution of the optimization problem~\eqref{eq:opt}. Hence, it is not necessary to exactly estimate $\gamma$.
\begin{remark}
In this paper, we assume that the communication is carried out over real and noiseless channels. Alternatively, one may consider the effects of quantisation and noise on the utility reconstruction problem. However, this is beyond the scope of this paper.
\end{remark}
Finally, we have the following standing assumption.
\begin{assumption} \label{assum:Qdistribution} The parameters $(Q,q)\in\mathcal{Q}\subseteq \mathcal{S}_{+}^n\times\mathbb{R}^n$ are randomly generated according to the non-degenerate probability density function $p:\mathcal{Q}\rightarrow \mathbb{R}_{\geq 0}$. Further, we assume the distribution of $(Q,q)$ is independent of the initialization of the algorithm $x[0]$, which is uniformly selected from $\{x|x^\top x\leq 1\}$. The eavesdropper knows these probability distributions.
\end{assumption}
In the following sections, first we consider the case where the problem \eqref{eq:opt} is unconstrained, and then we study the constrained case. Note that the choice of unconstrained quadratic programs is not restrictive. We can solve any general optimization problem using a sequence of constrained quadratic programs recursively using trust-region optimization techniques. Further, when using primal-dual techniques, we can solve a constrained quadratic program recursively using a sequence of unconstrained quadratic programs. Alternatively, as also discussed in Section~\ref{sec:qp_c}, we can use logarithmic barrier functions to solve a constrained quadratic program.
 
\section{Unconstrained Case} \label{sec:qp_unc}
In the case where the optimization problem is unconstrained, the gradient iterations are such that
\begin{equation} \label{eqn:algorithmdef:gradient}
x[k+1]=x[k]\ff{-}\alpha[k]  ( Q x[k] +q ) ,
\end{equation}
where $\alpha[k]$ is an appropriately selected step size (e.g., it is well known that if $\alpha[k]$, $\forall k\in\mathbb{Z}_{\geq 0}$, belongs to an appropriately selected interval on the positive reals, the iterations in~\eqref{eqn:algorithmdef:gradient} converge to the optimal solution~\cite{goldstein1964convex}). 

\begin{remark} As remarked earlier, in this paper our primary interest is in the case where the gradient iterations are implemented in a distributed manner. For instance, if we employ $n$ processors, each processor needs to follow the update rule
\begin{align*}
x_i[k+1]=(1\ff{-}\alpha[k]q_{ii})x_i[k]\ff{-}\sum_{j\neq i} \alpha[k]q_{ij}x_j[k]\ff{-} \alpha[k]q_i,
\end{align*}
where $x_i[k]$ is $i$'th element of the decision vector $x[k]$. 
To implement this update rule, the processors need to communicate the elements of the decision vector over the directed graph $\mathcal{G}$ with vertex set $\mathcal{V}_\mathcal{G}=\{1,\dots,n\}$ and edge set $\mathcal{E}_\mathcal{G}=\{(i,j)|1\leq i\neq j\leq j,q_{ij}\neq 0\}$.  The messages that the processors pass contain $x_i[k]$, $\forall i$, and by observing these messages, the eavesdropper can obtain $(x[t])_{t \in\mathbb{Z}_{\geq 0}}$. As a viable avenue for future research, we can consider the scenario in which the eavesdropper can only listen to a subset of the transmitted messages.
\end{remark}

In this scenario, the eavesdropper measurement model given by \eqref{eq:eaves_model} becomes
$$
y[k]=\alpha[k](Qx[k]+q).
$$
As before, at time step $k+1$, measurement pairs $(x[t],y[t])_{t=0}^k$ are available to the eavesdropper. We make the following standing assumption.
\begin{assumption} \label{assum:independent} The vectors $(x[t])_{t=0}^{n-1}$ are linearly independent.
\end{assumption}
We formally characterize the conditions for which Assumption \ref{assum:independent} holds in the course of this paper for each scenario. 

\begin{remark} 
Under Assumption~\ref{assum:Qdistribution}, the independence assumption is without loss of generality (i.e., Assumption~\ref{assum:independent} holds almost surely). Note that when using the gradient ascent algorithm, for the measurements to be dependent, the algorithm should be initialized at {a point along one of the eigenvectors} of $Q$, and $q$ should be parallel to that specific {eigenvector}. 
\end{remark}

\begin{remark} \label{remark:explainNecessary} Let us briefly explain why Assumption~\ref{assum:independent} is necessary. \ff{In this remark, we assume that $\alpha[k]$, $k\in\mathbb{N}$, is known. Note that this shows the necessity of Assumption~\ref{assum:independent} as  it discusses a more restrictive setup (because the eavesdropper has access to more information). }
In such case, we have
\begin{align*}
Qx[k]+q
&=\vec(Qx[k]+q)\\
&=\vec(Qx[k])+q\\
&=(x[k]^\top\otimes I)\vec(Q)+q, 
\end{align*}
where the last equality follows from Item~(5) in~\cite[p.\,97]{lutkepohl1997handbook}. This gives
\begin{align} \label{eqn:set_of_linear_eq}
\left(
\begin{bmatrix}
x[0]^\top & 1 \\
\vdots & \vdots \\
x[k]^\top & 1
\end{bmatrix}
\otimes I
\right)
\begin{bmatrix}
\vec(Q) \\ q
\end{bmatrix}
=
\begin{bmatrix}
y[0]/\alpha[0] \\
\vdots \\
y[k]/\alpha[k] 
\end{bmatrix}.
\end{align}
Let us denote the matrix on the left hand side of~\eqref{eqn:set_of_linear_eq} by $G$. 
To avoid admitting redundant equations, $G$ should have a full row rank. From the properties of the Kronecker  product~\cite[p.\,58]{lutkepohl1997handbook}, if Assumption~\ref{assum:independent} holds, for all $k\leq n-1$, we know that 
\begin{align*}
\rank\left(
\begin{bmatrix}
x[0]^\top & 1 \\
\vdots & \vdots \\
x[k]^\top & 1
\end{bmatrix}
\otimes I
\right)=\rank\left(
\begin{bmatrix}
x[0]^\top & 1 \\
\vdots & \vdots \\
x[k]^\top & 1
\end{bmatrix}
\right)\rank(I)=(k+1)n.
\end{align*}
The number of rows of $G$ is also equal to $(k+1)n$ and, hence, $G$ has full row rank. Consequently, there is no redundant equation.
\end{remark}

\subsection{Constant Step Size}
In this subsection, we assume $\alpha[k]=\alpha>0$ for all $k\in\mathbb{Z}_{\geq 0}$. In this case, it is not possible to reconstruct $Q,q$ uniquely because $\alpha$ shows itself as a scaling factor in these matrices.  In other words, for $\gamma$ in Problem \ref{prob:main} we have $\gamma=1/\alpha$. Let us construct the set
$\mathcal{M}[k]=\big\{(Q',q')\in\mathcal{S}_+^n\times \mathbb{R}^n\,|\, y[t]=Q'x[t]+q', \forall t=0,\dots,k \big\}.$ 
 This denotes the set of parameters that are consistent with our measurements up to time step $k+1$ (note that for constructing $(y[t])_{t=0}^k$, the eavesdropper needs to measure $(x[t])_{t=0}^{k+1}$).  Evidently, by construction, these sets are nonexpansive, i.e., $\mathcal{M}[k+1]\subseteq \mathcal{M}[k]$ for any $k\in\mathbb{Z}_{\geq 0}$. First, let us present a condition under which Assumption~\ref{assum:independent} holds.
 
\begin{remark} \label{remark:smi} The introduced estimator $\mathcal{M}[k]$ has close connections to the idea behind set-membership identification in which the set of permissible parameters are gradually reduced by removing realisations that are not compatible with the newly received measurements; see~\cite{kosut1992set,Milanese1996,milanese2004set}. \end{remark}
 
\begin{lemma} \label{lemma:independent:1} Let the distribution $p$ governing $Q$ (cf.~Assumption~ \ref{assum:Qdistribution}) be such that the algebraic multiplicity of every eigenvalue of $Q$ is almost surely equal to one. Then, $(x[t])_{t=0}^{n-1}$ are almost surely independent.
\end{lemma}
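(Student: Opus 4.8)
The plan is to use the affine structure of the constant--step--size gradient map to reduce the statement to the non-vanishing of a single polynomial in $x[0]$, and then to exploit that $x[0]$ possesses a density.

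First I would rewrite~\eqref{eqn:algorithmdef:gradient} (with $\alpha[k]\equiv\alpha$) around its fixed point. Since $Q\in\mathcal{S}_+^n$ is invertible, $x^\star\triangleq-Q^{-1}q$ is well defined and $x[k]-x^\star=M^k(x[0]-x^\star)$ with $M\triangleq I-\alpha Q$; thus $x[k]=M^kx[0]+(I-M^k)x^\star$ is an affine function of $x[0]$, and therefore
\begin{equation*}
D(x[0])\;\triangleq\;\det\!\big(\big[\,x[0]\ \ x[1]\ \ \cdots\ \ x[n-1]\,\big]\big)
\end{equation*}
is a polynomial in the $n$ entries of $x[0]$, with $(x[t])_{t=0}^{n-1}$ linearly independent exactly when $D(x[0])\neq 0$. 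Because $x[0]$ is uniform on the unit ball (hence absolutely continuous with respect to Lebesgue measure on $\mathbb{R}^n$) and, by Assumption~\ref{assum:Qdistribution}, independent of $(Q,q)$, it suffices to show that for almost every realization of $(Q,q)$ the polynomial $D$ is not the zero polynomial: the zero set of a nonzero polynomial is Lebesgue--null, so conditionally on such $(Q,q)$ the event $D(x[0])=0$ has probability zero, and integrating over $(Q,q)$ gives the claim.

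The heart of the argument is then to prove $D\not\equiv0$ for almost all $(Q,q)$, and this is exactly where the hypothesis is used. Almost surely $Q=U\diag(\lambda_1,\dots,\lambda_n)U^\top$ with $U$ orthogonal and $\lambda_1,\dots,\lambda_n$ pairwise distinct (the content of the ``algebraic multiplicity one'' assumption), hence $M=U\diag(\mu_1,\dots,\mu_n)U^\top$ with $\mu_i=1-\alpha\lambda_i$ also pairwise distinct. Passing to the coordinates $U^\top x[k]$, whose $i$-th entry equals $a_i+\mu_i^k c_i$ with $a\triangleq U^\top x^\star$ and $c\triangleq U^\top(x[0]-x^\star)$, one obtains $D(x[0])=\pm\det(a\mathbf{1}^\top+\diag(c)V)$, where $V\triangleq[\mu_i^k]_{i=1,\dots,n}^{k=0,\dots,n-1}$ is a Vandermonde matrix, invertible because the $\mu_i$ are distinct. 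Specializing $c=t\mathbf{1}$ for a scalar $t$ and applying the matrix determinant lemma, $\det(a\mathbf{1}^\top+tV)$ becomes a polynomial in $t$ with leading coefficient $\det V\neq0$, hence nonzero for some $t$; translating back through the invertible affine map $x[0]\mapsto c$ exhibits a point $x[0]\in\mathbb{R}^n$ with $D(x[0])\neq0$, so $D\not\equiv0$.

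I expect the only genuine obstacle to be finding the right change of variables in the last step --- diagonalizing $Q$ so that the determinant $D$ turns into a rank-one perturbation of a Vandermonde matrix whose non-degeneracy is precisely what distinct eigenvalues guarantee. The affine reduction and the measure-theoretic conclusion are routine. This also clarifies the role of the hypothesis: with a repeated eigenvalue $V$ is singular, and the iterates can then genuinely fail to be independent, e.g.\ when the algorithm is initialized inside an eigenspace of $Q$ with $q$ aligned to it, in line with the remark preceding the lemma.
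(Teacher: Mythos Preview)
Your argument is correct and complete. It differs from the paper's route in a meaningful way, so a brief comparison is warranted.

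The paper splits the matrix $[x[0],\dots,x[n-1]]^\top$ into the Krylov block $\big[x[0],(I-\alpha Q)x[0],\dots,(I-\alpha Q)^{n-1}x[0]\big]^\top$ and a $q$-dependent remainder, then argues that the sum has full rank whenever the Krylov block does; full rank of the Krylov block is identified with observability of the pair $\big((I-\alpha Q)^\top,x[0]^\top\big)$ and is certified via the Popov--Belevitch--Hautus test, which is where simple eigenvalues enter. Your proof instead absorbs $q$ by shifting to the fixed point $x^\star=-Q^{-1}q$, diagonalises $M=I-\alpha Q$, and reduces the determinant to $\det\big(a\mathbf{1}^\top+\diag(c)V\big)$ with $V$ Vandermonde; distinct eigenvalues then appear directly as invertibility of $V$, and a one-parameter specialisation of $c$ exhibits a nonzero value. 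The paper's approach is shorter for a control-theory audience and generalises readily to the time-varying case treated later (Lemmas~\ref{lemma:independent:2}--\ref{lemma:independent:3}), at the cost of a slightly informal passage from ``Krylov block full rank'' to ``sum full rank''. Your approach is more self-contained---no external observability machinery---and handles the $q$ term cleanly through the fixed-point shift, making the polynomial-in-$x[0]$ structure and the Vandermonde mechanism explicit; it also avoids the small imprecision in the paper's condition~(ii), which really requires $x[0]$ not to be orthogonal to any eigenvector rather than not to be an eigenvector.
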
 
 
\begin{proof} Notice that
\begin{align*}
x[k+1]
&=x[k]\ff{-}\alpha(Qx[k]+q)=(I\ff{-}\alpha Q)x[k]\ff{-}\alpha q.
\end{align*}
Therefore, we have
\begin{align} \label{eqn:rankequality}
\begin{bmatrix}
x[0]^\top \\ x[1]^\top \\ \vdots \\ x[n-1]^\top
\end{bmatrix}
=
\begin{bmatrix}
x[0]^\top \\ x[0]^\top(I\ff{-}\alpha Q)^\top \\ \vdots \\ x[0]^\top(I\ff{-}\alpha Q)^{(n-1)\top}
\end{bmatrix}\ff{-}
\begin{bmatrix}
0_{1\times n} \\ (\alpha q)^\top \\ \vdots \\ \sum_{j=0}^{n-2}(\alpha q)^\top(I\ff{-}\alpha Q)^{j\top}
\end{bmatrix}.
\end{align}
Since $x[0]$ is selected randomly and independently from the pair $(Q,q)$, the iterates $(x[t])_{t=0}^{n-1}$ are independent (i.e., the matrix on the left-hand side of~\eqref{eqn:rankequality} is full rank) if 
\begin{align} \label{eqn:condition1}
\rank\left(
\begin{bmatrix}
x[0]^\top \\ x[0]^\top(I\ff{-}\alpha Q)^\top \\ \vdots \\ x[0]^\top(I\ff{-}\alpha Q)^{(n-1)\top}
\end{bmatrix}\right)=n,
\end{align}
which is equivalent to that the pair $((I\ff{-}\alpha Q)^\top,x[0]^\top)$ is observable. From the controllability/observability literature~\cite[p.\,123]{levine1999control}, we know that~\eqref{eqn:condition1} holds if and only if
\begin{align*}
\rank\left(
\begin{bmatrix}
(I\ff{-}\alpha Q)^\top-\mu I \\ x[0]^\top
\end{bmatrix}\right)=n
\end{align*}
for all eigenvalues $\mu$ of $(I\ff{-}\alpha Q)^\top$. This condition is satisfied if (\textit{i})~the algebraic multiplicity of all the eigenvalues of $(I\ff{-}\alpha Q)^\top$ is equal to one~\cite[p.\,59]{lutkepohl1997handbook} and (\textit{ii}) $x[0]$ is not an eigenvector of $(I\ff{-}\alpha Q)^\top$ (which is satisfied with probability one since $x[0]$ and $Q$ are drawn independently). This is, in turn, satisfied if the algebraic multiplicity of every eigenvalue of $Q$ is equal to one.
\end{proof}

The following theorem shows that after enough measurements, the set $\mathcal{M}[k]$ becomes a singleton.

\begin{theorem} \label{tho:constnat_step_size} Let $n\geq 3$. Then $\mathcal{M}[k]=\{(\alpha Q,\alpha q)\}$ for all $k\geq \lceil (n+1)/2 \rceil$.
\end{theorem}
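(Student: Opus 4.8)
The plan is to linearise the condition defining $\mathcal{M}[k]$ and then show that the resulting homogeneous linear system is forced to have only the trivial solution once $k$ reaches $\lceil(n+1)/2\rceil$. Fix any candidate $(Q',q')\in\mathcal{M}[k]$ and set $\Delta Q:=Q'-\alpha Q$ and $\Delta q:=q'-\alpha q$; since $Q'$ and $Q$ are both symmetric, $\Delta Q$ is symmetric as well. Using $y[t]=\alpha(Qx[t]+q)$, membership $(Q',q')\in\mathcal{M}[k]$ is equivalent to
\begin{equation*}
\Delta Q\,x[t]+\Delta q=0,\qquad t=0,\dots,k.
\end{equation*}
Subtracting the $t=0$ equation from the others yields $\Delta q=-\Delta Q\,x[0]$ and $\Delta Q(x[t]-x[0])=0$ for $1\le t\le k$, so it suffices to prove $\Delta Q=0$. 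The nonexpansiveness $\mathcal{M}[k+1]\subseteq\mathcal{M}[k]$ then propagates the conclusion to all larger $k$, and $(\alpha Q,\alpha q)$ clearly lies in $\mathcal{M}[k]\cap(\mathcal{S}_+^n\times\mathbb{R}^n)$, so the set is exactly the claimed singleton.

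The next step is to vectorise economically, using symmetry. A symmetric $\Delta Q$ is described by $\binom{n+1}{2}$ scalars, so stacking these with the $n$ entries of $\Delta q$ gives an unknown $\xi\in\mathbb{R}^{\binom{n+1}{2}+n}$, and the system above takes the form $G\xi=0$ with $G$ an $((k+1)n)\times(\binom{n+1}{2}+n)$ matrix assembled from $x[0],\dots,x[k]$ via the Kronecker/duplication identities in the spirit of Remark~\ref{remark:explainNecessary}. The matrix $G$ has at least as many rows as columns precisely when $(k+1)n\ge\binom{n+1}{2}+n$, i.e.\ when $k\ge\lceil(n+1)/2\rceil$, which is exactly the threshold in the statement; the restriction $n\ge3$ is what lets this count coincide with a genuine full-column-rank statement rather than being defeated by a low-dimensional pathology.

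The heart of the argument — and the step I expect to be hardest — is showing that for $k\ge\lceil(n+1)/2\rceil$, and under Assumption~\ref{assum:independent}, the matrix $G$ has full column rank, so $\xi=0$ is forced. Linear independence of $x[0],\dots,x[n-1]$ alone is not enough here: one also has to use that the iterates obey the Krylov recursion $x[t+1]=(I-\alpha Q)x[t]-\alpha q$ driven by the \emph{symmetric} operator $I-\alpha Q$ (cf.\ \eqref{eqn:rankequality}) and couple this with the symmetry of $\Delta Q$. Concretely, I would pass to an eigenbasis of $I-\alpha Q$ — legitimate because, by the reasoning behind Lemma~\ref{lemma:independent:1}, its eigenvalues are almost surely distinct and $x[0]$ has a nonzero component along every eigenvector — rewrite each constraint $\Delta Q(x[t]-x[0])=0$ as a Vandermonde-type condition on the rows of $\Delta Q$ in that basis, and then invoke the symmetry $\Delta Q=\Delta Q^\top$ to convert the $k$ available row-constraints into enough scalar constraints to annihilate all $\binom{n+1}{2}$ entries once $k\ge\lceil(n+1)/2\rceil$. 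Verifying that this Vandermonde/symmetry interaction really closes up at exactly this threshold — with $n\ge3$ ruling out the degenerate small dimensions — is the routine-but-delicate computation I would carry out last; once it is in place, $\Delta Q=0$, hence $\Delta q=0$, and therefore $\mathcal{M}[k]=\{(\alpha Q,\alpha q)\}$ for all $k\ge\lceil(n+1)/2\rceil$.
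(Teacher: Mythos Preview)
Your reduction to the homogeneous system $\Delta Q\,x[t]+\Delta q=0$ and the ensuing dimension count are exactly what the paper does; indeed, the paper's proof \emph{is} just that count---it tallies $(n^2+3n)/2$ unknowns against $(k+1)n$ equations, invokes Assumption~\ref{assum:independent} to declare the equations non-redundant, and asserts uniqueness without further justification. (Its reason for requiring $n\ge 3$ is also different from yours: it is simply that $\lceil(n+1)/2\rceil\le n-1$ is needed so that all iterates used lie among the $x[0],\dots,x[n-1]$ covered by Assumption~\ref{assum:independent}.)

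You are right to flag that Assumption~\ref{assum:independent}, via Remark~\ref{remark:explainNecessary}, gives full \emph{row} rank, not the full \emph{column} rank your system $G\xi=0$ actually needs. Unfortunately the Vandermonde/symmetry computation you plan cannot close at the stated threshold, because the conclusion is false there. For any $k\le n-1$ the differences $x[1]-x[0],\dots,x[k]-x[0]$ span a subspace of dimension at most $n-1$, so a nonzero $v$ orthogonal to all of them exists; then $\Delta Q=\lambda vv^\top$ and $\Delta q=-\lambda(v^\top x[0])v$ satisfy $\Delta Q\,x[t]+\Delta q=0$ for every $t=0,\dots,k$, and for small $|\lambda|$ one still has $\alpha Q+\lambda vv^\top\in\mathcal{S}_+^n$. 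Hence $\mathcal{M}[k]$ contains a genuine one-parameter family whenever $k\le n-1$, and since $\lceil(n+1)/2\rceil\le n-1$ for all $n\ge 3$, the theorem fails at its own threshold. Concretely, for $n=3$, $Q=\diag(1,2,3)$, $q=0$, $\alpha=1/10$, $x[0]=(1,1,1)^\top$ one finds $v\propto(-3,3,-1)^\top$ and checks directly that $\mathcal{M}[2]$ is not a singleton. The Krylov structure you appeal to does eventually kill this rank-one family---generically at $k=n$, where $v^\top y[n-1]\neq 0$ forces $v=0$---but not at $k=\lceil(n+1)/2\rceil$; so neither the paper's argument nor your proposed completion can succeed as stated.
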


\begin{proof}
Because $Q'$ is a symmetric matrix, it only has $(n^2+n)/2$ unknowns ($=1+2+\dots+n$). Therefore, the eavesdropper needs to calculate $(n^2+3n)/2$ unknowns corresponding with the entries of $Q'$ and $q'$. Due to Assumption~\ref{assum:independent}, if the eavesdropper collects measurements for up to $k=\lceil (n+1)/2 \rceil$ (notice that, by definition, the number of measurements in $\mathcal{M}[k]$ is equal to $k+1$), the set of linear equations defining $\mathcal{M}[k]$ admits a unique solution, $(\hat{Q},\hat{q})$, where $\hat{Q}=\alpha Q$ and $\hat{q}=\alpha q$. To be able to use Assumption~\ref{assum:independent}, we should have $\lceil (n+1)/2 \rceil=k\leq n-1$ based on Remark~\ref{remark:explainNecessary}, which gives $n\geq 3$. 
\end{proof}

\begin{remark}
If the eavesdropper collects  $k <  \lceil (n+1)/2 \rceil$ measurements, the set $\mathcal{M}[k]$ has infinitely many elements. As a result, if the iterations are terminated at $k <  \lceil (n+1)/2 \rceil$ iterations, it is impossible for the eavesdropper to reconstruct the parameters of the utility function. However, the confidentiality is guaranteed here at the price of getting a possibly inaccurate solution. 
\end{remark}

\begin{remark}
For $n<3$, regardless of the number of collected measurements, $\mathcal{M}[k]$ never becomes a singleton. This is due to the fact that Assumption~\ref{assum:independent} does not hold any more. 
\end{remark}

\begin{remark} The presented estimator also works if the step sizes are selected as $\alpha[k]=c/k^\delta$ for all $k\in\mathbb{Z}_{\geq 0}$ and for a fixed $\delta\in(1/2,1]$. This is true because we can always scale the measurements $(x[t],y[t])_{t=0}^{k}$ to $(x[t],t^\delta y[t])_{t=0}^{k}$ and, subsequently, use the presented results for the constant step size. If $\delta$ is not known by the eavesdropper, we can construct a filter to also reconstruct $\delta$ based on the measurements $(x[t],y[t])_{t=0}^{k}$, however, this would make the problem considerably more difficult because of the complexity of the Bayes updates in this case. 
\end{remark}

This remark shows that the choice of a time-varying step size as described above does not make the reconstruction of the utility function any harder than the constant step size case so long as $\delta$ is publicly known.

\subsection{Random Step Sizes Drawn from a Finite Set} \label{subsec:finite_stochastic_stepsize}
\ff{Next we consider the case where the step sizes $\alpha[k]$, for $k \in \mathbb{Z}_{\ge 0}$, are drawn uniformly from a set $\mathcal{A} = \{ \alpha^{(1)}, \dots, \alpha^{(s)}\}$ of $s$ distinct values.}
Moreover, we assume that the step sizes $(\alpha[k])_{k\in\mathbb{Z}_{\geq 0}}$ are independently and identically distributed over time. 
First, let us present a condition for which  Assumption~\ref{assum:independent} holds.

\begin{lemma} \label{lemma:independent:2} Let $\mathcal{Q}$ be such that the following condition is almost surely satisfied
\begin{align} \label{lemma:independent:2:condition}
\rank\left(
\begin{bmatrix}
x[0]^\top \\ x[0]^\top(I\ff{-}\alpha[0] Q)^\top \\ \vdots \\ x[0]^\top(I\ff{-}\alpha[0] Q)^\top\cdots (I\ff{-}\alpha[n-2] Q)^\top
\end{bmatrix}\right)=n.
\end{align}
Then, $(x[t])_{t=0}^{n-1}$ are almost surely independent.
\end{lemma}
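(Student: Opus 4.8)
\section*{Proof proposal}

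The plan is to mimic the proof of Lemma~\ref{lemma:independent:1}, replacing the powers $(I-\alpha Q)^\top$ by the telescoping products induced by the time-varying step sizes, and to turn the genericity step of that proof into a rigorous statement by a degree count in $x[0]$.

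First I would unroll the recursion $x[k+1]=(I-\alpha[k]Q)x[k]-\alpha[k]q$ to obtain, for each $k\in\{0,\dots,n-1\}$,
\begin{align*}
x[k]^\top = x[0]^\top P_k - r_k^\top, \qquad P_k \triangleq (I-\alpha[0] Q)^\top(I-\alpha[1] Q)^\top\cdots(I-\alpha[k-1] Q)^\top,
\end{align*}
where $P_0=I$ and $r_k\in\mathbb{R}^n$ depends on $q$, $Q$ and $(\alpha[0],\dots,\alpha[k-1])$ but not on $x[0]$ (it is the evident generalization of the second matrix in~\eqref{eqn:rankequality}). Stacking the rows,
\begin{align*}
X \triangleq \begin{bmatrix} x[0]^\top \\ \vdots \\ x[n-1]^\top \end{bmatrix}
= \begin{bmatrix} x[0]^\top P_0 \\ \vdots \\ x[0]^\top P_{n-1}\end{bmatrix} - \begin{bmatrix} r_0^\top \\ \vdots \\ r_{n-1}^\top \end{bmatrix},
\end{align*}
and $(x[t])_{t=0}^{n-1}$ are independent precisely when $\det X\neq 0$.

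The crucial observation is that, with $(Q,q,\alpha[0],\dots,\alpha[n-2])$ held fixed, $\det X$ is a polynomial in the entries of $x[0]$ whose homogeneous component of top degree $n$ is exactly $g(x[0])\triangleq\det[\,x[0]^\top P_0;\,\dots;\,x[0]^\top P_{n-1}\,]$, i.e.\ the determinant of the matrix appearing in~\eqref{lemma:independent:2:condition}. Indeed, each row of $X$ is affine in $x[0]$ with linear part $x[0]^\top P_k$ and constant part $-r_k^\top$, so expanding the determinant multilinearly over the rows, the only degree-$n$ contribution comes from selecting the linear part in every row, while every other contribution has degree at most $n-1$. By hypothesis~\eqref{lemma:independent:2:condition} together with Fubini's theorem (conditioning on the step sizes and on $(Q,q)$), for almost every $(Q,q,\alpha[0],\dots,\alpha[n-2])$ the polynomial $g$ fails to vanish for almost every $x[0]$, hence $g\not\equiv 0$; and a nonzero homogeneous part of degree $n$ cannot be cancelled by the lower-order $q$-dependent terms, so $\det X$ is a nonzero polynomial in $x[0]$, whose zero set is therefore Lebesgue-null. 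Since $x[0]$ is drawn from a density on $\{x:x^\top x\le 1\}$ independently of $(Q,q)$ and the step sizes, $\det X\neq 0$ almost surely, which is the claim.

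The only nontrivial point — the same one left implicit in Lemma~\ref{lemma:independent:1} — is the implication ``the matrix in~\eqref{lemma:independent:2:condition} has full rank almost surely'' $\Rightarrow$ ``$\det X\neq 0$ almost surely'': the $q$-terms $r_k$ genuinely perturb $X$, and one must argue they cannot destroy the rank. The degree count above does exactly this, by isolating the $q$-independent homogeneous piece $g$ as the leading term in $x[0]$. The remaining work — writing out the $r_k$ explicitly and verifying the measure-theoretic details (Fubini, and the fact that a nontrivial polynomial vanishes only on a null set) — is routine and follows the template of~\eqref{eqn:rankequality}.
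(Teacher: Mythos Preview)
Your proposal is correct and follows the same route as the paper: unroll the recursion to obtain the time-varying analog of~\eqref{eqn:rankequality:TV}, split each row into its $x[0]$-linear part $x[0]^\top P_k$ and the $q$-dependent offset $-r_k^\top$, and conclude from the independence of $x[0]$ and $(Q,q)$. The paper's proof stops at the decomposition and simply asserts the conclusion, whereas your degree-count argument (isolating $g(x[0])$ as the top homogeneous component of $\det X$) makes rigorous precisely the step the paper leaves implicit---namely, that the $q$-offsets cannot destroy the rank.
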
 
 
\begin{proof} Similar to the proof of Lemma~\ref{lemma:independent:1}, notice that $x[k+1]=(I\ff{-}\alpha[k] Q)x[k]\ff{-}\alpha[k] q.$ Therefore, we have
\begin{align} 
\begin{bmatrix}
x[0]^\top \\ x[1]^\top \\ \vdots \\ x[n-1]^\top
\end{bmatrix}
=&
\begin{bmatrix}
x[0]^\top \\ x[0]^\top(I\ff{-}\alpha[0] Q)^\top \\ \vdots \\ x[0]^\top(I\ff{-}\alpha[0] Q)^\top\cdots (I\ff{-}\alpha[n-2] Q)^\top
\end{bmatrix} \nonumber \\
&\ff{-}
\begin{bmatrix}
0_{1\times n} \\ (\alpha q)^\top \\ \vdots \\ \sum_{j=0}^{n-2}(\alpha[j] q)^\top(I\ff{-}\alpha[j+1] Q)^\top\cdots (I\ff{-}\alpha[n-2] Q)^\top
\end{bmatrix}.\label{eqn:rankequality:TV}
\end{align}
Since $x[0]$ is selected randomly and independently from the pair $(Q,q)$, the iterates $(x[t])_{t=0}^{n-1}$ are independent if the condition in the statement of the lemma holds.
\end{proof} 

\begin{remark} Condition~\eqref{lemma:independent:2:condition} is intimately related to the observability of time-varying linear systems \linebreak $((I\ff{-}\alpha[k] Q)^\top,x[0]^\top)$; see \cite[p.~462]{levine1996control}.
\end{remark}

At iteration $k+1$, after observing $(x[t],y[t])_{t=0}^k$, the eavesdropper may use the Bayes' update rule, consecutively, for generating the conditional density function
\begin{subequations}
\begin{align}
p(Q',q'|(x[t],y[t])_{t=0}^k) 
&\propto p(y[k]|Q',q',(x[t],y[t])_{t=0}^{k-1},x[k]) p(Q',q'|(x[t],y[t])_{t=0}^{k-1},x[k])\nonumber\\
&= p(y[k]|Q',q',(x[t],y[t])_{t=0}^{k-1},x[k]) p(Q',q'|(x[t],y[t])_{t=0}^{k-1}) \label{eqn:proof:equality1}\\
&= p(y[k]|Q',q',x[k]) p(Q',q'|(x[t],y[t])_{t=0}^{k-1})\label{eqn:proof:equality2}\\
&= \bigg[\sum_{\alpha'\in\mathcal{A}} p(y[k]|Q',q',x[k],\alpha')p(\alpha')\bigg] p(Q',q'|(x[t],y[t])_{t=0}^{k-1})\label{eqn:proof:equality3}\\
&\propto\bigg[\sum_{\alpha'\in\mathcal{A}} p(y[k]|Q',q',x[k],\alpha')\bigg] p(Q',q'|(x[t],y[t])_{t=0}^{k-1})\label{eqn:proof:equality4}
\end{align}
\end{subequations}
where~\eqref{eqn:proof:equality1} follows from independence of the cost parameters $(Q,q)$ and $x[k]$ given $(x[t],y[t])_{t=0}^{k-1}$ (note that $x[k]$ is merely \ff{the negation of} the summation of all $(y[t])_{t=0}^{k-1}$ \ff{plus} $x[0]$ and, hence, it is redundant information),~\eqref{eqn:proof:equality2} follows from independence of $y[k]$ from $(x[t],y[t])_{t=0}^{k-1}$ given $x[k]$ and the cost parameters $(Q,q)$,~\eqref{eqn:proof:equality3} follows from conditioning on $\alpha$, and~\eqref{eqn:proof:equality4} follows from the uniform distribution of the step sizes. Now, note that
\begin{align*}
\sum_{\alpha'\in\mathcal{A}} p(y[k]|Q',q',x[k],\alpha')
&=
\left\{
\begin{array}{ll}
1, & \exists \alpha''\in\mathcal{A}: y[k]=\alpha''(Q'x[k]+q'),\\
0, & \mbox{otherwise},
\end{array}
\right. \\
&=\mathbbm{1}_{(Q',q')\in \mathcal{D}(x[k],y[k])},
\end{align*}
where
$\mathcal{D}(x[k],y[k])=\big\{(Q',q')\in\mathcal{S}_{+}^n\times \mathbb{R}^n\,|\, \exists\alpha'\in\mathcal{A}: y[k]=\alpha'(Q'x[k]+q') \big\}.$ Hence, we have
$p(Q',q'| \; (x[t],y[t])_{t=0}^k)\propto \mathbbm{1}_{(Q',q')\in \mathcal{D}(x[k],y[k])}p(Q',q'|(x[t],y[t])_{t=0}^{k-1}).$
Using induction, we can show that
\begin{align*}
p(Q',q'|(x[t],y[t])_{t=0}^k)
&\propto \bigg[\prod_{t=0}^k\mathbbm{1}_{(Q',q')\in \mathcal{D}(x[t],y[t])}\bigg]p(Q',q')= \mathbbm{1}_{(Q',q')\in \cap_{t=0}^k\mathcal{D}(x[t],y[t])}p(Q',q').
\end{align*}
Now, we can redefine
\begin{align*}
\mathcal{M}[k]
&=\cap_{t=0}^k\mathcal{D}(x[t],y[t])=\big\{(Q',q')\in\mathbb{R}^{n\times n}\times \mathbb{R}^n\,|\,\exists \alpha'[t]\in\mathcal{A}: y[t]=\alpha'[t](Q'x[t]+q'), \forall t=0,\dots,k \big\}.
\end{align*}
Therefore, Bayes' rule gives
$p(Q',q'|(x[t],y[t])_{t=0}^k) = \mathbbm{1}_{(Q',q')\in \mathcal{M}[k]}p(Q',q').$ 
The next theorem shows that the set $\mathcal{M}[k]$ becomes a singleton and, hence, the Bayesian filter converges to the correct parameter selection.

\begin{theorem} \label{tho:finite_stochastic_stepsize} Let $n\geq 5$. Then $\mathcal{M}[k]=\{(Q,q)\}$ for all $k\geq \lceil (n+3)/2 \rceil$. 
\end{theorem}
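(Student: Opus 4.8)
The plan is to re-run the parameter-counting argument behind the proof of Theorem~\ref{tho:constnat_step_size}, this time carrying the unknown step sizes as extra variables. Since $(Q,q)\in\mathcal{M}[k]$ by construction, it suffices to show that every $(Q',q')\in\mathcal{M}[k]$ coincides with $(Q,q)$. So fix such a pair. By the definition of $\mathcal{M}[k]$, for each $t\in\{0,\dots,k\}$ there is some $\alpha'[t]\in\mathcal{A}$ with $y[t]=\alpha'[t](Q'x[t]+q')$; combining this with $y[t]=\alpha[t](Qx[t]+q)$ and writing $c_t:=\alpha[t]/\alpha'[t]$ gives
\begin{align*}
Q'x[t]+q'=c_t\,(Qx[t]+q),\qquad t=0,\dots,k.
\end{align*}
Since $Qx[t]+q$ is a known vector, these are $(k+1)n$ homogeneous linear equations in the unknowns consisting of the $(n^2+n)/2$ free entries of the symmetric matrix $Q'$, the $n$ entries of $q'$, and the $k+1$ scalars $c_0,\dots,c_k$.

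The second step is to note that this system has an unavoidable one-dimensional kernel, namely $\{(\rho Q,\rho q,\rho,\dots,\rho):\rho\in\mathbb{R}\}$, and then to argue that, under Assumption~\ref{assum:independent}, this line is the \emph{entire} solution set once $k\ge\lceil(n+3)/2\rceil$. By dimension counting the solution set has dimension at least $(n^2+3n)/2-(k+1)(n-1)$, and the hypothesis $k\ge\lceil(n+3)/2\rceil$ is precisely what brings this lower bound down to $\le 1$; the substantive point is the reverse inequality---that the coefficient matrix attains the largest rank compatible with the scaling degeneracy, so that the solution dimension is exactly $1$---and this is what I expect to be the main obstacle. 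One invokes the linear independence of $x[0],\dots,x[k]$ (Lemma~\ref{lemma:independent:2}), which is legitimate only for $k\le n-1$; since $\lceil(n+3)/2\rceil\le n-1$ forces $n\ge 5$, this is exactly where the standing hypothesis $n\ge 5$ enters. Relative to the constant-step-size case, the one extra free scalar per measurement both raises the number of iterations needed from $\lceil(n+1)/2\rceil$ to $\lceil(n+3)/2\rceil$ and makes the rank bookkeeping more delicate: one must use the symmetry of $Q'$ together with the genericity provided by Assumption~\ref{assum:Qdistribution} in order to decouple the $Q'$- and $q'$-columns from the $c_t$-columns.

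Finally, the previous step yields $\mathcal{M}[k]\subseteq\{(\rho Q,\rho q):\rho\in\mathbb{R}\}$, and positive definiteness of the first component forces $\rho>0$. It remains to eliminate $\rho\neq 1$: membership of $(\rho Q,\rho q)$ in $\mathcal{M}[k]$ requires $\alpha[t]/\rho\in\mathcal{A}$ for every $t=0,\dots,k$, and since $\mathcal{A}$ is finite and the step sizes are drawn i.i.d.\ uniformly, the realised sequence almost surely (it suffices, for instance, that it attains both $\min\mathcal{A}$ and $\max\mathcal{A}$) admits no scaling $\rho\neq 1$, so $\rho=1$. Hence $\mathcal{M}[k]=\{(Q,q)\}$, and the monotonicity $\mathcal{M}[k+1]\subseteq\mathcal{M}[k]$ extends the conclusion to all $k\ge\lceil(n+3)/2\rceil$.
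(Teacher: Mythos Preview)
Your route differs from the paper's primary proof. The paper enumerates all $s^{k+1}$ candidate step-size sequences and, for each fixed sequence, notes that the resulting linear system in $(Q',q')$ alone is over-determined once $k\ge\lceil(n+1)/2\rceil$ (this is just the constant-step-size count together with Assumption~\ref{assum:independent}); one further measurement is then asserted to make every wrong sequence inconsistent. Your method---carrying the ratios $c_t=\alpha[t]/\alpha'[t]$ as additional continuous unknowns in a single homogeneous system---is essentially the paper's \emph{alternative} argument presented immediately after the theorem (the $\beta[t]=1/\alpha'[t]$ change of variable). This buys you the avoidance of the exponential enumeration, at the price of having to control the rank of a larger coefficient matrix, a step you correctly flag as ``the main obstacle'' but do not actually carry out.

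The last step of your argument contains a concrete gap. You claim that the realised sequence $(\alpha[t])_{t=0}^{k}$ almost surely admits no scaling $\rho\neq1$ with $\alpha[t]/\rho\in\mathcal{A}$ for all $t$, offering as a sufficient condition that it attain both $\min\mathcal{A}$ and $\max\mathcal{A}$. For finite $k$ that event has probability strictly less than $1$, so this is not an almost-sure argument; and when $\mathcal{A}$ has multiplicative structure it can fail outright. Take $\mathcal{A}=\{1,2,4\}$ and any realisation lying in $\{1,2\}^{k+1}$ (which occurs with probability $(2/3)^{k+1}>0$): then $\rho=1/2$ sends every $\alpha[t]$ to $2\alpha[t]\in\{2,4\}\subset\mathcal{A}$, so $(Q/2,q/2)\in\mathcal{M}[k]$ and the singleton conclusion is false for that realisation. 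The paper's primary proof glosses over precisely this point---it simply asserts that ``only one of these sets remain nonempty''---so the gap is arguably shared; but the mechanism you propose to close it does not work as stated, and some extra hypothesis on $\mathcal{A}$ (ruling out nontrivial self-scalings) appears to be needed.
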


\begin{proof} 
Let us enumerate all the possible sequences of the step sizes $\mathcal{A}^{k+1}$ for each $k$. Now, for any sequence of step sizes $(\alpha'[t])_{t=0}^{k}\in\mathcal{A}^{k+1}$, the consistent parameter sets are given by the set-valued mapping
$
\mathcal{Z}[k;(\alpha'[t])_{t=0}^{k}]=\{(Q'x[t]+q')=y[t]/\alpha'[t],\forall t=0,\dots,k\}.
$
Evidently, $\mathcal{Z}[k;(\alpha'[t])_{t=0}^{k}]$ is either a singleton or the empty set if 
$(n+1)/2\leq k$. This is the case because for the mentioned horizon length, given the sequences of the step sizes, the number of the equations (which we assumed are not redundant; see Assumption~\ref{assum:independent}) is larger than or equal the number of free variables. Now, if we get only one more measurement, i.e., $k=\lceil (n+3)/2 \rceil$, only one of these sets remain nonempty and that points to the true parameters. To be able to use Assumption~\ref{assum:independent}, we should have $\lceil (n+3)/2 \rceil=k\leq n-1$, which gives $n\geq 5$. 
\end{proof}

Note that the estimator constructed in the proof of Theorem~\ref{tho:finite_stochastic_stepsize} relies on the fact that we can enumerate all the possible sequences of the step sizes for $k=\lceil (n+3)/2 \rceil$ and solve a set of linear equations for each one to extract the true parameters. The number of all the possible sequences of the step sizes is equal to $s^{\lceil (n+3)/2 \rceil}=\mathcal{O}(s^n)$. Thus, this estimator is  practically implementable only for relatively small $s$ and $n$. \ff{However, this problem can be fixed with a simple change of variable. To do so, we can alternatively define the set of utility functions consistent with the observations as
\begin{align*}
\mathcal{M}[k]
=\big\{(Q',q')\in\mathbb{R}^{n\times n}\times \mathbb{R}^n\,|\,\exists \beta[t]\in\{1/\alpha^{(1)},\dots,1/\alpha^{(s)}\}: \beta[t]y[t]=Q'x[t]+q', \forall t=0,\dots,k \big\}.
\end{align*}
Following the same line of reasoning as in Remark~\ref{remark:explainNecessary}, we can see that the elements of $\mathcal{M}[k]$ are the solutions of the set of equations
\begin{align*} 
\left(
\begin{bmatrix}
x[0]^\top & 1 \\
\vdots & \vdots \\
x[k]^\top & 1
\end{bmatrix}
\otimes I
\right)
\begin{bmatrix}
\vec(Q') \\ q'
\end{bmatrix}
=
\begin{bmatrix}
y[0]\beta[0] \\
\vdots \\
y[k]\beta[k] 
\end{bmatrix}.
\end{align*}
We may rewrite this set of equations as
\begin{align*} 
\underbrace{
\begin{bmatrix}
x[0]^\top\otimes I & I & y[0]  & \cdots & 0 \\
\vdots & \vdots & \vdots & \ddots & \vdots \\
x[k]^\top\otimes I & I & 0 & \cdots &  y[k]
\end{bmatrix}
}_{\begin{array}{c}
:=\Phi
\end{array}}
\begin{bmatrix}
\vec(Q') \\ q' \\ \beta[0] \\ \vdots \\ \beta[k]
\end{bmatrix}
=
0.
\end{align*}
Using the arguments of Remark~\ref{remark:explainNecessary} and Lemma~\ref{lemma:independent:1}, we may observe that $\Phi$ is a full row rank matrix. Now, note that the number of unknown decision variables here is $k+n(n+1)/2+n$ and the number of equations is $(k+1)n$. Therefore, for this set of equations to have a unique solution (up to a scaling), we need to have $k+n(n+1)/2+n\leq (k+1)n$ which means
$k\geq n(n+1)/(2(n-1)).$ This is satisfied if we select $k\geq \lceil (n+3)/2 \rceil$ as recommended in Theorem~\ref{tho:finite_stochastic_stepsize}. Therefore, with this change of variable, we can reconstruct the utility function based on the observations in polynomial time.

\begin{remark}
Note that, in this subsection, we did not use the fact that $\mathcal{A}$ is a finite set. Therefore, the presented argument is also valid when the step sizes are selected from closed interval of the positive reals.
\end{remark}

\subsection{Agent-Dependent Step Sizes}
In this subsection, we assume that each entry of the decision variable $x[k]$ is updated by an agent that can select its step size independently. In this case, the gradient iterations for each entry of the decision variable becomes
\begin{align*}
x_i[k+1]=x_i[k]-\alpha_i[k]\bigg(q_{ii}x_i[k]+\sum_{j\neq i} q_{ij}x_j[k]+ q_i\bigg),
\end{align*}
where $\alpha_i[k]$ are independently and identically distributed discrete random  variables selected with equal probability from the set $\mathcal{A}$. Let $\alpha_i[k]$ and $\alpha_j[k]$ be statistically independent if $i\neq j$. As a result, we get
\begin{equation*}
x[k+1]=x[k]-A[k](Q x[k]+q),
\end{equation*}
where 
\begin{align*}
A[k]=
\begin{bmatrix}
\alpha_1[k] & \dots & 0 \\
\vdots & \ddots & \vdots \\
0 & \dots & \alpha_n[k]
\end{bmatrix}.
\end{align*}
We may define $\mathcal{A}^{n\times n}_d$ as the set of diagonal matrices of size $n\times n$ with entries belonging to $\mathcal{A}$. By definition, $A[k]\in\mathcal{A}^{n\times n}_d$ for all $k\geq 0$. Note that we can guarantee the convergence of the gradient algorithm even when using agent-dependent step sizes; see~\ref{appendix:convergence} for more information.
Similar to the previous subsections, let us define
\begin{align*}
y[k]=A[k](Qx[k]+q).
\end{align*}
As before, at time step $k + 1$, measurement pairs $(x[t],y[t])_{t=0}^k$ are available to the eavesdropper. Following the same line of reasoning as in the previous subsections, at iteration $k+1$, the eavesdropper may use the Bayes' update rule, consecutively, for generating the conditional density function
\begin{align}
p(Q',q'|(x[t],y[t])_{t=0}^k) 
&\propto
\mathbbm{1}_{(Q',q')\in \mathcal{D}(x[k],y[k])}
p(Q',q'|(x[t],y[t])_{t=0}^{k-1}).
\end{align}
where
$\mathcal{D}(x[k],y[k])=\big\{(Q',q')\in\mathcal{S}_{+}^n\times \mathbb{R}^n\,|\, \exists A'\in\mathcal{A}^{n\times n}_d: y[k]=A'(Q'x[k]+q') \big\}.$ Hence, using induction, we get
\begin{align*}
p(Q',q'|(x[t],y[t])_{t=0}^k)
&\propto \bigg[\prod_{t=0}^k\mathbbm{1}_{(Q',q')\in \mathcal{D}(x[t],y[t])}\bigg]p(Q',q')=\mathbbm{1}_{(Q',q')\in \mathcal{M}[k]}p(Q',q'),
\end{align*}
where
\begin{align*}
\mathcal{M}[k]
&=\cap_{t=0}^k\mathcal{D}(x[t],y[t])=\big\{(Q',q')\in\mathbb{R}^{n\times n}\times \mathbb{R}^n\,|\,\exists A'[t]\in\mathcal{A}^{n\times n}_d: y[t]=A'[t](Q'x[t]+q'), \forall t=0,\dots,k \big\}.
\end{align*}
The next theorem shows Bayesian filter is always inconclusive. 

\begin{theorem} \label{tho:notsingleton}
The cardinality of the set $\mathcal{M}[k]$ is uncountably infinite for all $k \in \mathbb{Z}_{\ge 0}$.
\end{theorem}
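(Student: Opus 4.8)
The plan is to show that, for every fixed $k$, the set $\mathcal{M}[k]$ already contains an explicit one‑parameter family of parameters obtained by jointly rescaling the true utility and the (unknown) diagonal step‑size pattern that generated the observed data. So let $A[0],\dots,A[k]\in\mathcal{A}^{n\times n}_d$ be the diagonal step‑size matrices that actually produced the trajectory, so that $y[t]=A[t](Qx[t]+q)$ for $t=0,\dots,k$. For a scalar $\gamma>0$ I would test the candidate $(\hat Q,\hat q)=(\gamma Q,\gamma q)$ paired with $A'[t]:=\gamma^{-1}A[t]$. Whenever every diagonal entry of each $A'[t]$ still lies in $\mathcal{A}$, we have $A'[t]\in\mathcal{A}^{n\times n}_d$ and
\[
A'[t]\big(\hat Q x[t]+\hat q\big)=\gamma^{-1}A[t]\big(\gamma Q x[t]+\gamma q\big)=A[t](Q x[t]+q)=y[t],
\]
so $(\hat Q,\hat q)\in\mathcal{D}(x[t],y[t])$ for every $t$, i.e.\ $(\hat Q,\hat q)\in\mathcal{M}[k]$; moreover $\hat Q=\gamma Q\in\mathcal{S}_+^n$ since $\gamma>0$, and $\gamma\mapsto(\gamma Q,\gamma q)$ is injective because $Q\neq 0$. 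Note this step uses neither Assumption~\ref{assum:independent} nor any non‑degeneracy of the iterates, which is consistent with the statement holding for all $n$ and all $k$.

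What then remains is to check that the admissible scaling set $\Gamma:=\{\gamma>0:\ \gamma^{-1}\alpha_i[t]\in\mathcal{A}\ \text{for all}\ 1\le i\le n,\ 0\le t\le k\}$ is uncountable. Taking $\mathcal{A}$ to be a closed interval $[\underline\alpha,\overline\alpha]\subset\mathbb{R}_{>0}$ — the regime relevant here, cf.\ the remark at the end of Section~\ref{subsec:finite_stochastic_stepsize} — each constraint $\gamma^{-1}\alpha_i[t]\in\mathcal{A}$ amounts to $\alpha_i[t]/\overline\alpha\le\gamma\le\alpha_i[t]/\underline\alpha$, so $\Gamma=\big[\max_{i,t}\alpha_i[t]/\overline\alpha,\ \min_{i,t}\alpha_i[t]/\underline\alpha\big]$, a closed interval containing $\gamma=1$. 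By Assumption~\ref{assum:Qdistribution}, the finitely many step sizes $\alpha_i[t]$ ($i\le n$, $t\le k$) are drawn from a non‑degenerate distribution on $\mathcal{A}$, so almost surely $\max_{i,t}\alpha_i[t]<\overline\alpha$ and $\min_{i,t}\alpha_i[t]>\underline\alpha$; hence $\max_{i,t}\alpha_i[t]/\overline\alpha<1<\min_{i,t}\alpha_i[t]/\underline\alpha$ and $\Gamma$ has positive length. Therefore $\{(\gamma Q,\gamma q):\gamma\in\Gamma\}\subseteq\mathcal{M}[k]$ is uncountable, which establishes the claim.

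The main obstacle is precisely this last point: ensuring the rescaling slack $\Gamma$ is a genuine continuum rather than a finite set. This is where one must use that the admissible step‑size set has nonempty interior and that the realised step sizes almost surely avoid its endpoints; with a strictly finite $\mathcal{A}$ the admissible $\gamma$ would be finitely many, and for large $k$ the linear system $Q'x[t]+q'=z[t]$, $t=0,\dots,k$, associated with any fixed step‑size pattern becomes injective in $(Q',q')$, so one would then be left only with the weaker fact that for small $k$ that system is under‑determined. Finally, if one reads $\mathcal{M}[k]$ with $Q'$ ranging over $\mathbb{R}^{n\times n}$ rather than $\mathcal{S}_+^n$ (as in its displayed description), the same joint‑rescaling trick applied coordinate‑wise — replacing $\gamma I$ by $\diag(\gamma_1,\dots,\gamma_n)$ with each $\gamma_i$ in the corresponding single‑index slack interval — produces an $n$‑dimensional uncountable family, a mild strengthening.
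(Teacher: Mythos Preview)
Your argument is correct (modulo a mis-citation: the almost-sure avoidance of the endpoints of $\mathcal{A}$ follows from the step sizes being drawn uniformly from $\mathcal{A}$, not from Assumption~\ref{assum:Qdistribution}, which concerns $(Q,q)$), but it is genuinely different from the paper's. The paper does not build an explicit family at all: it passes to the inverse variables $B[t]=A'[t]^{-1}$, notes that the defining constraints $B[t]y[t]=Q'x[t]+q'$ are \emph{linear} in $(Q',q',B[0],\dots,B[k])$, and then simply counts: each new time step contributes $n$ equations and $n$ fresh unknowns (the diagonal of $B[t]$), so the system is forever under-determined by $n^2+n$ degrees of freedom, hence its solution set --- and therefore $\mathcal{M}[k]$ --- is an affine subspace of positive dimension.

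What each approach buys: the paper's dimension count gives the stronger conclusion that the consistent set is an $(n^2+n)$-dimensional affine variety, not merely uncountable, and it does so in one line without needing any probabilistic ``slack'' argument. Your explicit scaling construction, on the other hand, is more transparent about \emph{where} the uncountability comes from and --- crucially --- is honest about the membership constraint $A'[t]\in\mathcal{A}^{n\times n}_d$: the paper's degree-of-freedom count tacitly treats the diagonal entries of $B[t]$ as free real variables, which only yields uncountably many admissible $(Q',q')$ when $\mathcal{A}$ has nonempty interior. You make this hypothesis explicit and correctly flag that for a strictly finite $\mathcal{A}$ and large $k$ the argument (either yours or the paper's) breaks down, since for each of the finitely many step-size patterns the linear system in $(Q',q')$ becomes over-determined. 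That caveat is a genuine contribution your write-up adds over the paper's proof.
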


\begin{proof}
Firstly, note that we can redefine $\mathcal{M}[k]$ as 
\begin{align*}
\mathcal{M}[k]
&=\cap_{t=0}^k\mathcal{D}(x[t],y[t])=\big\{(Q',q')\in\mathbb{R}^{n\times n}\times \mathbb{R}^n\,|\,\exists B[t]\in\mathcal{B}^{n\times n}_d: B[k]y[t]=Q'x[t]+q', \forall t=0,\dots,k \big\},
\end{align*}
where $\mathcal{B}^{n\times n}_d$ denotes the set of all diagonal matrices of size $n\times n$ with diagonal entries belonging to $\mathcal{B}=\{1/\alpha^{(1)},\dots,1/\alpha^{(s)}\}$. This way, as described in the previous subsection, we need to solve a linear set of equations to find the entries of the set $\mathcal{M}[k]$. Now, note that, in each iteration, we receive $n$ new measurements while adding $n$ new variables  (i.e., step sizes). Therefore, even if all the measurements are independent, there is not a unique solution (since the number of unknowns is always strictly larger than the number of measurements). 
\end{proof}

Theorem~\ref{tho:notsingleton} shows that no matter how many measurements the eavesdropper gathers, it is impossible to reconstruct the cost function. 
}

\section{Constrained Case} \label{sec:qp_c}
For the constrained optimization problem, we add the constraints using logarithmic barrier functions. In that case, we get the unconstrained optimization problem
\begin{align*}
\max_{x\in\mathbb{R}^n} \quad - \frac{1}{2}  x^\top Q x-q^\top x + \lambda \sum_{i=1}^m \log(\ff{d}_i-\ff{C}_i x),
\end{align*}
where $\ff{C}_i\in\mathbb{R}^{1\times n}$ is $i$-th row of the matrix $\ff{C}$ and $\lambda>0$ is a scaling factor. As $\lambda$ approaches zero, the solution of this problem converges to the solution of the original constrained optimization problem~\cite[p.\,566]{boyd2004convex}. 
When using the gradient algorithm, in this case, we get
\begin{align*}
x[k+1]=x[k]\ff{-}\alpha[k]\left(Qx[k]+q\ff{+}\sum_{i=1}^m\frac{\lambda}{\ff{d}_i-\ff{C}_i x[k]}\ff{C}_i^\top\right), \, x[0]\in\mathcal{X}.
\end{align*}
Therefore, an eavesdropper that listens to consecutive iterations can construct the measurements
\begin{align*}
y[k]=\alpha[k]\left(Qx[k]+q\ff{+}\sum_{i=1}^m\frac{\lambda}{\ff{d}_i-\ff{C}_i x[k]}\ff{C}_i^\top\right).
\end{align*}
In the reminder of this section, we assume that the eavesdropper knows the parameters of the constraints $\ff{(C,d)}$ and only wants to estimate the parameters of the utility function and the scaling factor. 

\begin{remark} In many problems, the constraints are enforced by the physical characteristics of the problem and are hence public knowledge. This is different from the utility function that is typically motivated by the internal mechanism of the system and the priorities of its operator and is hence kept private.
\end{remark}

Let us consider the case that the step sizes are selected randomly and uniformly from a finite set $\mathcal{A}=\ff{\{\alpha^{(1)},\dots,\alpha^{(s)}\}}$ as in Subsection~\ref{subsec:finite_stochastic_stepsize}; the proofs for the other cases are not different. 

\begin{assumption} \label{assum:lambda} The parameter $\lambda\in\Lambda\subset\mathbb{R}_{\geq 0}$ is randomly generated according to the probability density function $p: \Lambda \rightarrow \mathbb{R}_{\geq 0}$. \ff{Assume that $x[0]$ is chosen uniformly at random from $\{x|Cx<d\}$.} Further, the distribution of $\lambda$ is independent of the initialization of the algorithm $x[0]$ and utility function parameters $(Q,q)$.
\end{assumption}

Similarly, we can present the following condition for the satisfaction of Assumption~\ref{assum:independent}.

\begin{lemma} \label{lemma:independent:3} Let $\mathcal{Q}$ be such that the following condition is almost surely satisfied
\begin{align*}
\rank\left(
\begin{bmatrix}
x[0]^\top \\ x[0]^\top(I\ff{-}\alpha[0] Q)^\top \\ \vdots \\ x[0]^\top(I\ff{-}\alpha[0] Q)^\top\cdots (I\ff{-}\alpha[n-2] Q)^\top
\end{bmatrix}\right)=n.
\end{align*}
Then, $(x[t])_{t=0}^{n-1}$ are almost surely independent.
\end{lemma}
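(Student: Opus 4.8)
\emph{Proof proposal.} The plan is to follow the template of Lemma~\ref{lemma:independent:2}, the barrier term being the only genuinely new ingredient. Writing the iteration as
\[
x[k+1]=(I-\alpha[k]Q)x[k]-\alpha[k]q-\alpha[k]\lambda\sum_{i=1}^{m}\frac{C_i^\top}{d_i-C_ix[k]},
\]
I would unroll it and stack the rows $x[0]^\top,\dots,x[n-1]^\top$, obtaining, just as in \eqref{eqn:rankequality:TV}, a decomposition of this $n\times n$ matrix $M$ into the product-structured matrix appearing in the lemma's rank hypothesis minus a remainder whose $k$th row now lumps together the affine ($q$) term \emph{and} the barrier contributions accumulated along $x[0],\dots,x[k-1]$. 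Nonsingularity of $M$ is exactly the sought linear independence of $(x[t])_{t=0}^{n-1}$, so the task is to show $\det M\neq 0$ almost surely.

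The obstacle---the one place where the argument of Lemma~\ref{lemma:independent:2} does not transfer verbatim---is that the remainder now depends on $x[0]$ through the factors $\lambda/(d_i-C_ix[t])$, so $\det M$ is no longer ``affine in $x[0]$ plus a constant'' and the leading-order argument used there does not apply directly. I would get around this by also exploiting the randomness in $\lambda$. For fixed $(Q,q)$ and step sizes, unrolling the recursion shows that each $x[k]$, and hence $\det M$, is a \emph{rational} function $D(x[0],\lambda)$ of $(x[0],\lambda)$ jointly, the only denominators being products of the factors $d_i-C_ix[t]$, which are nonzero along feasible trajectories. Setting $\lambda=0$ annihilates every barrier term and collapses the recursion to the affine one of Lemma~\ref{lemma:independent:2}, so $D(\cdot,0)$ is precisely the determinant of the stacked iterate matrix of that lemma; by Lemma~\ref{lemma:independent:2}, under the rank hypothesis this determinant is not identically zero in $x[0]$, for almost every $(Q,q,\text{step sizes})$. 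Hence $D$ is a nonzero rational function on $\mathbb{R}^n\times\mathbb{R}$, so the set of $(x[0],\lambda)$ at which $D$ vanishes or is undefined is contained in a proper algebraic subset and therefore has Lebesgue measure zero.

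It remains to cash this out probabilistically. By Assumptions~\ref{assum:Qdistribution} and~\ref{assum:lambda}, $x[0]$ is uniform on the open set $\{x|Cx<d\}$ and $\lambda$ is drawn, independently of everything else, from a density on $\Lambda$, so $(x[0],\lambda)$ has a joint density with respect to Lebesgue measure; the exceptional set above is therefore hit with probability zero, conditionally on almost every $(Q,q,\text{step sizes})$, and integrating out yields that $(x[t])_{t=0}^{n-1}$ are almost surely independent. Two side points belong in the write-up: for the step-size rules under consideration the barrier-gradient iterates stay strictly feasible, so $M$ (equivalently $D$) is well defined along the trajectory; and the finitely many genericity exclusions on $(Q,\text{step sizes})$---e.g.\ that no $1/\alpha^{(i)}$ is an eigenvalue of $Q$, so each $I-\alpha[k]Q$ is invertible---are null events under $p$. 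As an alternative to the $\lambda=0$ reduction, one can mimic Lemma~\ref{lemma:independent:2} more literally: along a ray $x[0]=tv$ with $v$ avoiding finitely many hyperplanes, each barrier term is $O(1/t)$, so $x[k]=t\,(I-\alpha[k-1]Q)\cdots(I-\alpha[0]Q)v+O(1)$ and $\det M=t^{n}\det M_1(v)+O(t^{n-1})$, where $M_1(v)$ is the rank-hypothesis matrix evaluated at $v$; the hypothesis forces $\det M_1(v)\neq 0$ for generic $v$, again giving $D\not\equiv 0$. The conceptual hurdle is thus the nonlinearity of the barrier map, which defeats the verbatim Lemma~\ref{lemma:independent:2} argument; the $\lambda=0$ reduction (or the ray estimate) neutralises it, after which only the routine rational-function/measure-zero bookkeeping remains.
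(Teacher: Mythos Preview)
Your approach is essentially the paper's: write the stacked iterates as the rank-hypothesis matrix minus a remainder carrying the barrier contributions, and then invoke the independence of $\lambda$ (Assumption~\ref{assum:lambda}) to argue these extra terms almost surely do not spoil nonsingularity. The paper's own proof is a one-line sketch (``since $\lambda$ is selected independently\dots these terms almost surely do not contribute to the rank''), whereas you supply the mechanism it leaves implicit---the $\lambda=0$ reduction to Lemma~\ref{lemma:independent:2} together with the rational-function/measure-zero argument---so your write-up is a strict elaboration of the same idea rather than a different route.
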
 
 
\begin{proof} The proof follows from the same line of reasoning as in Lemma~\ref{lemma:independent:2}. The only difference is that, in this case, the right-hand side of~\eqref{eqn:rankequality:TV} admits additional nonlinear terms that are multiplied by $\lambda$. However, since $\lambda$ is selected independently of the parameters and the initial condition (see Assumption~\ref{assum:lambda}), these terms almost surely do not contribute to the rank of the matrix on the left-hand side of~\eqref{eqn:rankequality:TV}.
\end{proof} 

At iteration $k+1$, after observing $(x[t],y[t])_{t=0}^k$, the eavesdropper may use the Bayes' update rule, consecutively, for generating the conditional density function
\begin{align*}
p(Q',q',\lambda'|(x[t],y[t])_{t=0}^k)
&= \bigg[\sum_{\alpha'\in\mathcal{A}} p(y[k]|Q',q',\lambda',x[k],\alpha')p(\alpha')\bigg] p(Q',q',\lambda',|(x[t],y[t])_{t=0}^{k-1})\\
&\propto\bigg[\sum_{\alpha'\in\mathcal{A}} p(y[k]|Q',q',\lambda',x[k],\alpha')\bigg]p(Q',q',\lambda'|(x[t],y[t])_{t=0}^{k-1}).
\end{align*}
Similarly, we have
$\sum_{\alpha'\in\mathcal{A}} p(y[k]|Q',q',\lambda',x[k],\alpha')=\mathbbm{1}_{(Q',q',\lambda')\in \mathcal{D}(x[k],y[k])},$
where
\begin{align*}
\mathcal{D}(x[k],y[k])=\bigg\{&(Q',q',\lambda')\in\mathcal{S}_{+}^n\times \mathbb{R}^n\times \Lambda\,|\,\exists\alpha'\in\mathcal{A}:  y[k]=\alpha'\bigg(Q'x[k]+q'\ff{+}\lambda'\sum_{i=1}^m\frac{1}{\ff{d}_i-\ff{C}_i x[k]}\ff{C}_i^\top\bigg) \bigg\}.
\end{align*}
Again, using induction, we can show that
\begin{align*}
p(Q',q',\lambda'|(x[t],y[t])_{t=0}^k)
&\propto \bigg[\prod_{t=0}^k\mathbbm{1}_{(Q',q',\lambda')\in \mathcal{D}(x[t],y[t])}\bigg]p(Q',q')p(\lambda')= \mathbbm{1}_{(Q',q',\lambda')\in \cap_{t=0}^k\mathcal{D}(x[t],y[t])}p(Q',q')p(\lambda').
\end{align*}
Now, we can define
\begin{align*}
\mathcal{M}[k]
&=\cap_{t=0}^k\mathcal{D}(x[t],y[t])\\
&=\bigg\{(Q',q',\lambda')\in\mathcal{S}_+^{n}\times \mathbb{R}^n\times \Lambda\,|\,\exists \alpha'[t]\in\mathcal{A}:  y[\ff{t}]=\alpha'\ff{[t]}\bigg(Q'x[\ff{t}]+q'\ff{+}\lambda'\sum_{i=1}^m\frac{1}{\ff{d}_i-\ff{C}_i x[\ff{t}]}\ff{C}_i^\top\bigg),\forall \ff{t=0,\dots,k} \bigg\},
\end{align*}
which gives 
$p(Q',q',\lambda'|(x[t],y[t])_{t=0}^k) = \mathbbm{1}_{(Q',q',\lambda')\in \mathcal{M}[k]}p(Q',q')p(\lambda').$ 
The next theorem shows that the Bayesian filter converges to the correct parameter selection.

\begin{theorem} \label{tho:constrained} Let $n\geq 6$. $\mathcal{M}[k]=\{(Q,q,\lambda)\}$ for all $k\geq \lceil (n+3)/2+1/n \rceil$.
\end{theorem}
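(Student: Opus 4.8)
The plan is to adapt the enumeration argument used in the proof of Theorem~\ref{tho:finite_stochastic_stepsize}, now carrying the extra scalar unknown $\lambda'$ and exploiting that the barrier gradient is computable by the eavesdropper. Since $(C,d)$ and the iterates are observed, for every $t$ one can form the vector $b[t]:=\sum_{i=1}^m \frac{1}{d_i-C_i x[t]}\,C_i^\top\in\mathbb{R}^n$. Fixing a candidate step-size sequence $(\alpha'[t])_{t=0}^k\in\mathcal{A}^{k+1}$, the membership condition defining $\mathcal{D}(x[t],y[t])$ becomes, for that sequence, the linear system $Q'x[t]+q'+\lambda' b[t]=y[t]/\alpha'[t]$, $t=0,\dots,k$, in the unknowns $Q'$ (symmetric), $q'\in\mathbb{R}^n$, $\lambda'\in\mathbb{R}$ — that is, $n(n+1)/2+n+1$ scalar unknowns. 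Denote its solution set by $\mathcal{Z}[k;(\alpha'[t])_{t=0}^k]$; then, exactly as in the redefinition of $\mathcal{M}[k]$ preceding the theorem, $\mathcal{M}[k]=\bigcup_{(\alpha'[t])\in\mathcal{A}^{k+1}}\mathcal{Z}[k;(\alpha'[t])_{t=0}^k]$.

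First I would cast the system for a fixed guessed sequence in the form of~\eqref{eqn:set_of_linear_eq} — with the $\alpha[t]$ there playing the role of the guessed $\alpha'[t]$ — augmented by one extra column, $[\,b[0]^\top\ \cdots\ b[k]^\top\,]^\top$, multiplying the new unknown $\lambda'$. Call this augmented coefficient matrix $\Psi$; it has $(k+1)n$ rows, and deleting its last column leaves the matrix denoted $G$ in Remark~\ref{remark:explainNecessary}. By the Kronecker rank identity of~\cite[p.\,58]{lutkepohl1997handbook}, under Assumption~\ref{assum:independent} — which holds almost surely by Lemma~\ref{lemma:independent:3} — $G$ has full row rank $(k+1)n$ for $k\le n-1$; hence so does $\Psi$, and the $(k+1)n$ equations are non-redundant. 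Reasoning as in the proof of Theorem~\ref{tho:finite_stochastic_stepsize}, after imposing the symmetry of $Q'$ there are $n(n+1)/2+n+1$ free scalar unknowns, so $\mathcal{Z}[k;(\alpha'[t])_{t=0}^k]$ is empty or a single point as soon as the number of (non-redundant) equations is at least the number of unknowns, i.e. $(k+1)n\ge n(n+1)/2+n+1$, equivalently $k\ge(n+1)/2+1/n$. The additional $1/n$ over the constant-step-size threshold $(n+1)/2$ of Theorem~\ref{tho:constnat_step_size} is precisely the cost of the single extra unknown $\lambda'$, and is the source of the $1/n$ appearing in the statement.

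Next I would use one more measurement to isolate the true sequence. The true sequence $(\alpha[t])_{t=0}^k$ yields a set that is nonempty — it contains $(Q,q,\lambda)$ — hence equal to $\{(Q,q,\lambda)\}$ once it is a single point. For any other sequence, the additional ($k$-th) block of equations is, almost surely under Assumptions~\ref{assum:Qdistribution} and~\ref{assum:lambda}, inconsistent with the point forced by the first $k$ blocks, so that $\mathcal{Z}=\emptyset$. Therefore $\mathcal{M}[k]=\{(Q,q,\lambda)\}$ once $k=\lceil(n+1)/2+1/n\rceil+1=\lceil(n+3)/2+1/n\rceil$; since $\mathcal{M}[k+1]\subseteq\mathcal{M}[k]$ and $(Q,q,\lambda)\in\mathcal{M}[k]$ for every $k$, it remains a singleton for all larger $k$, and by the Bayes recursion derived just before the theorem the posterior then identifies $(Q,q,\lambda)$ exactly. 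Finally, invoking Assumption~\ref{assum:independent} requires $k\le n-1$; substituting $k=\lceil(n+3)/2+1/n\rceil$, this fails at $n=5$ (the ceiling gives $5>4$) and holds for all $n\ge 6$, which is the standing hypothesis.

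The step I expect to be the main obstacle is making the non-redundancy claim fully rigorous in the presence of both the symmetry constraint on $Q'$ and the barrier column: one has to show that the $(k+1)n\times(n(n+1)/2+n+1)$ coefficient matrix has full column rank almost surely in the regime $(n+1)/2+1/n\le k\le n-1$ — equivalently, that the barrier column, a nonlinear (rational) function of the iterates and of the public data $(C,d)$, is almost surely not absorbed into the span of the linear-in-$x[t]$ columns, and that no nonzero symmetric matrix lies in the relevant kernel. This parallels the point in the proof of Lemma~\ref{lemma:independent:3} where the $\lambda$-dependent terms ``almost surely do not contribute to the rank'', and should again follow from the non-degeneracy of the joint law of $(Q,q,\lambda,x[0],(\alpha[t]))$ guaranteed by Assumptions~\ref{assum:Qdistribution} and~\ref{assum:lambda}; but, unlike the Kronecker bookkeeping, it requires a genuine argument. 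A secondary point needing care is the almost-sure inconsistency, for each wrong step-size sequence, of the disambiguating equation used above.
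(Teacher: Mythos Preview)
Your proposal is correct and follows essentially the same approach as the paper: enumerate the candidate step-size sequences, count equations $(k+1)n$ against unknowns $n(n+1)/2+n+1$ to conclude each $\mathcal{Z}$ is a singleton or empty for $k\ge (n+1)/2+1/n$, use one extra measurement to eliminate the wrong sequences, and check $k\le n-1$ to obtain $n\ge 6$. You are in fact more careful than the paper's own proof in flagging the almost-sure full-column-rank and disambiguation issues, which the paper handles only by the informal ``the number of equations is larger than or equal the number of free variables'' together with a reference back to Theorem~\ref{tho:finite_stochastic_stepsize}.
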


\begin{proof} Let us assume that we use an estimator that, for each time step $k$, enumerates all the possible sequences of the step sizes $\mathcal{A}^{k+1}$. Now, for any sequence of step sizes $(\alpha'[t])_{t=0}^{k}\in\mathcal{A}^{k+1}$, the consistent parameter sets are given by the mapping
$\mathcal{Z}[k;(\alpha'[t])_{t=0}^{k}]=\{(Q'x[\ff{t}]+q'\ff{+}\sum_{i=1}^m\frac{\lambda'}{\ff{d}_i-\ff{C}_i x[k]}\ff{C}_i^\top)=y[t]/\alpha'[t],\forall t=0,\dots,k\}.$
Similar to the proof of Theorem~\ref{tho:finite_stochastic_stepsize}, $\mathcal{Z}[k;(\alpha'[t])_{t=0}^{k-1}]$ is either a singleton or the empty set if 
$k+1\geq ( (n+1)n/2 + n+1)/n=(n+3)/2 + 1/n.$
This is true since, given the sequences of the step sizes, the number of the equations is larger than or equal the number of free variables. Now, if we get one more measurement, i.e., $k=\lceil (n+3)/2+1/n \rceil$, only one of these sets remain nonempty. To be able to use Assumption~\ref{assum:independent}, we should have $\lceil (n+3)/2+1/n \rceil=k\leq n-1$, which gives $n\geq 6$. 
\end{proof}

\begin{remark} In the interior point method, the algorithm automatically shrinks the scaling factor $\lambda$ to extract the optimal point. If the rule for shrinking the scaling factor is known, one can use the Bayesian filter above to estimate the parameters of the utility function. 
\end{remark}

\ff{
Note that, similar to the previous section, we may introduce change of variables to reconstruct the set $\mathcal{M}[k]$, and subsequently the utility function, in polynomial time. To do so, note that we may alternatively define $\mathcal{M}[k]$ as
\begin{align*}
\mathcal{M}[k]
=\bigg\{(Q',q',\lambda')\in\mathcal{S}_+^{n}\times \mathbb{R}^n\times \Lambda\,|\,&\exists \beta[t]\in\{1/\alpha^{(1)},\dots,1/\alpha^{(s)}\}:\\ &  \beta[t]y[t]=Q'x[t]+q'+\lambda'\sum_{i=1}^m\frac{1}{d_i-C_i x[t]}C_i^\top,\forall t=0,\dots,k \bigg\}.
\end{align*}
This way, we get a set of linear equations, which as showed in Theorem~\ref{tho:constrained} admits a unique solution for $k\geq \lceil (n+3)/2+1/n \rceil$.

Alternatively, we can use independently selected random step sizes at each agent to render the problem of reconstructability impossible. 
In this case, the update gradient algorithm becomes
\begin{align*}
x[k+1]=x[k]-A[k]\left(Qx[k]+q+\sum_{i=1}^m\frac{\lambda}{d_i-C_i x[k]}C_i^\top\right), \, x[0]\in\mathcal{X},
\end{align*}
where $A[k]\in\mathcal{A}^{n\times n}_d$ contains the stochastically-varying agent-dependent step sizes. Therefore, an eavesdropper that listens to consecutive iterations can construct the measurements
\begin{align*}
y[k]=A[k]\left(Qx[k]+q+\sum_{i=1}^m\frac{\lambda}{d_i-C_i x[k]}C_i^\top\right).
\end{align*}
Now, employing the Bayesian filter, the estimator can deduce that
\begin{align*}
p(Q',q',\lambda'|(x[t],y[t])_{t=0}^k)
&\propto \mathbbm{1}_{(Q',q',\lambda')\in \mathcal{M}[k]}p(Q',q')p(\lambda'),
\end{align*}
where
\begin{align*}
\mathcal{M}[k]
=\bigg\{(Q',q',\lambda')\in\mathcal{S}_+^{n}\times \mathbb{R}^n\times \Lambda\,|\,\exists A[t]\in\mathcal{A}^{n\times n}_d:  y[t]=A[t]\bigg(Q'x[t]+q'+\lambda'\sum_{i=1}^m\frac{1}{d_i-C_i x[t]}C_i^\top\bigg),\forall t=0,\dots,k \bigg\}.
\end{align*} 
Now, we may prove the following impossibility results. 

\begin{theorem}
The cardinality of the set $\mathcal{M}[k]$ is uncountably infinite for all $k \in \mathbb{Z}_{\ge 0}$.
\end{theorem}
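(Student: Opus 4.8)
The plan is to follow the argument of Theorem~\ref{tho:notsingleton}, the unconstrained agent-dependent case, essentially verbatim; the only new feature is the extra scalar unknown $\lambda'$, which I will verify enters the relevant equations \emph{linearly}. First I would change variables to $B[t]:=A[t]^{-1}$ (a diagonal matrix whose entries are the reciprocals of the agents' step sizes at time $t$) and rewrite
\begin{align*}
\mathcal{M}[k]=\{(Q',q',\lambda')\in\mathcal{S}_+^{n}\times\mathbb{R}^n\times\Lambda \,:\, & \exists\,B[t]\text{ diagonal with entries in }\{1/\alpha^{(1)},\dots,1/\alpha^{(s)}\},\\ & B[t]y[t]=Q'x[t]+q'+\lambda'\,v[t],\ t=0,\dots,k\},
\end{align*}
where $v[t]:=\sum_{i=1}^m \tfrac{1}{d_i-C_i x[t]}C_i^\top\in\mathbb{R}^n$ is a \emph{known} vector, since the eavesdropper knows $(C,d)$ and observes $x[t]$. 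Stacking these relations over $t=0,\dots,k$ exactly as in Remark~\ref{remark:explainNecessary} and in the change-of-variables computation of Subsection~\ref{subsec:finite_stochastic_stepsize} yields a single linear system in the unknown vector $\big(\vec(Q'),q',\lambda',\diag(B[0]),\dots,\diag(B[k])\big)$; note in particular that $\lambda'$ multiplies the fixed observed vector $v[t]$, so the system genuinely is linear.

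The second step is the dimension count. Each new iteration contributes $n$ scalar equations but simultaneously introduces the $n$ fresh nuisance unknowns $\diag(B[t])$, and these cancel iteration by iteration; sitting on top of them are the $n(n+1)/2$ free entries of the symmetric $Q'$, the $n$ entries of $q'$, and the single scalar $\lambda'$, none of which is ever constrained away. Hence the number of unknowns exceeds the number of equations by at least $n(n+1)/2+n+1$ for \emph{every} $k\in\mathbb{Z}_{\geq 0}$. The system is also consistent: $(Q',q',\lambda')=(Q,q,\lambda)$ together with $B[t]=A[t]^{-1}$ is a solution. Since a nonempty linear system has solution set of dimension at least (number of columns) $-$ (number of rows), the solution set is a nonempty affine subspace of dimension at least $n(n+1)/2+n+1>0$, and this holds without any need to compute the rank of the coefficient matrix.

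The third step converts ``positive-dimensional affine solution set'' into ``$\mathcal{M}[k]$ uncountable''. A real affine subspace of positive dimension is uncountable; to land inside the admissible set I would fix the slice $\lambda'=\lambda\in\Lambda$ (which still leaves the analogous positive excess $n(n+1)/2+n$) and note that, because the true pair $(Q,q)$ lies in this positive-dimensional affine set while $\mathcal{S}_+^n\times\mathbb{R}^n$ is open, a relatively open, hence uncountable, neighbourhood of $(Q,q)$ inside the set still has $Q'\succ 0$. An even shorter route is available when the step sizes are drawn from a nondegenerate closed interval $[\alpha_-,\alpha_+]$ of positive reals (the situation discussed in the remark following Theorem~\ref{tho:finite_stochastic_stepsize}): membership in $\mathcal{M}[k]$ then reduces to the finitely many constraints $(Q'x[t]+q'+\lambda'v[t])_i/y_i[t]\in[1/\alpha_+,1/\alpha_-]$, each satisfied by the true parameters with the ratio landing in the \emph{open} interval, so $\mathcal{M}[k]$ already contains an open neighbourhood of $(Q,q,\lambda)$ for every $k$.

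I expect the main obstacle to be conceptual rather than computational: the ``$n$ fresh unknowns per iteration'' accounting is legitimate only if the reciprocal step sizes genuinely range over a continuum, i.e.\ if $\mathcal{A}$ is (or is treated as) a positive interval --- exactly the setting invoked around Theorem~\ref{tho:finite_stochastic_stepsize} and implicit in Theorem~\ref{tho:notsingleton}; for a strictly finite $\mathcal{A}$ one instead has to use Assumption~\ref{assum:independent}, via the observability-type condition of Lemma~\ref{lemma:independent:3}, to certify that the linear part of $\mathcal{M}[k]$ is still positive-dimensional. The one other thing to watch is that the barrier contribution does not reintroduce nonlinearity, and it does not, precisely because $v[t]$ is a fixed observed vector, in contrast with the nonlinear way $\lambda$ entered the \emph{dynamics} in Lemma~\ref{lemma:independent:3}.
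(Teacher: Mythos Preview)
Your proposal is correct and follows essentially the same route as the paper: the paper's proof simply says ``similar to Theorem~\ref{tho:notsingleton}'', and Theorem~\ref{tho:notsingleton} is exactly the change of variables $B[t]=A[t]^{-1}$ followed by the count ``each iteration adds $n$ equations and $n$ fresh step-size unknowns, so the system is always underdetermined''. Your write-up is in fact more careful than the paper on two points the paper glosses over: (i) you explicitly verify that the barrier term enters linearly because $v[t]$ is observed, and (ii) you flag the distinction between $\mathcal{A}$ being a finite set versus an interval and explain how to land back in $\mathcal{S}_+^n\times\mathbb{R}^n\times\Lambda$ via openness around the true parameters; the paper's dimension count silently treats the reciprocal step sizes as free real variables and does not address either issue.
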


\begin{proof}
The proof is similar to that of Theorem~\ref{tho:notsingleton}.
\end{proof}

}

\section{Conclusions}\label{sec:conc}
In this paper, we studied the problem of how to keep a utility function confidential even when the network over which the function is being optimized is compromised. Particularly, we considered the problem where an eavesdropper's objective is to reconstruct a quadratic utility function via measuring the decision variable iterations under a gradient method. We considered the impact of different choices of the step size on the reconstructability of the utility function and showed that for the case that the step size is not constant and is selected randomly from a sufficiently large set of appropriate candidates, it is virtually impossible for an eavesdropper to reconstruct the utility function. \ff{Therefore, the best design recommendation is to add time-varying agent-dependent random step sizes to the implemented dynamics.}  In addition to time-varying random step sizes, there are other ingredients that matter, such as having a uniform random direction for the initial condition and not executing too many gradient descent steps (since if the number of steps is below a threshold the solution cannot be uniquely determined even if having access to extraordinary computational capabilities). An interesting avenue for future research could be to devise a tractable algorithm that can approximate the utility function  and bound the accuracy of the approximation based on the statistics of the step size selection method. This can be done by using set-membership identification techniques (see Remark~\ref{remark:smi}) for bounding the difference between the identified and the true set of permissible parameters. Another avenue for future research could be to also study quasi-Newton or Newton methods because they require fewer iterations for converge and, thus, potentially minimize the amount of the leaked information. 

\section*{References}
\bibliographystyle{elsarticle-num}
\bibliography{ref}

\appendix

\ff{
\section{Convergence of Gradient Algorithm with Agent-Dependent Step Sizes} \label{appendix:convergence}
To present the results of this appendix, we need to introduce some notation. For a matrix $A$, we write $A\leq 0$ if $A$ is a symmetric negative semi-definite matrix. Further, for matrices $A$ and $B$ of the same dimension, we write $A\leq B$ if $A-B\leq 0$. 

Note that we follow the update rule $x[k+1]=x[k]-A[k](Qx[k]+q)$, where $A[k]$ is the matrix of step sizes satisfying $c_1 I \leq A[k] \leq c_2 I$ for all $k$ with $0<c_1<c_2$. In the reminder of this appendix, we determine conditions on $c_1,c_2$ that guarantee the convergence of the gradient algorithm.  These iterates converge to the maximizer of the utility function so long as the they satisfy  Wolfe's conditions; see Theorem~3.2 in~\cite[p.\,38]{wright1999numerical}. From Wolfe's conditions, for $0<\epsilon_1<\epsilon_2<1$, we have
\begin{subequations}
\begin{align}
(x[k]- A[k](Qx[k]+q))^\top Q(x[k]- &A[k](Qx[k]+q))+q^\top[x[k]- A[k](Qx[k]+q)]-x[k]^\top Qx[k]-q^\top x[k]\nonumber\\
&\leq -\epsilon_1  (Qx[k]+q)^\top A[k](Qx[k]+q),
\label{eqn:remarkWolfe1}\\
(Qx[k]+q)^\top  A[k] (Q(x[k]-& A[k](Qx[k]+q))+q)\leq \epsilon_2(Qx[k]+q)^\top A[k] (Qx[k]+q).
\label{eqn:remarkWolfe2}
\end{align}
\end{subequations}
We may rewrite~\eqref{eqn:remarkWolfe1} and~\eqref{eqn:remarkWolfe2}, perspectively, as
\begin{align*}
 \begin{bmatrix}
A[k]^{\frac{1}{2}}Qx[k] \\ A[k]^{\frac{1}{2}}q
\end{bmatrix}^\top
\begin{bmatrix}
 A[k]^{\frac{1}{2}}QA[k]^{\frac{1}{2}}+(\epsilon_1-2)I& 
\dfrac{2\epsilon_1-3}{2}I+ A[k]^{\frac{1}{2}}QA[k]^{\frac{1}{2}}\\ 
\dfrac{2\epsilon_1-3}{2} I+  A[k]^{\frac{1}{2}}QA[k]^{\frac{1}{2}}&
 A[k]^{\frac{1}{2}}QA[k]^{\frac{1}{2}}+(\epsilon_1-1)I
\end{bmatrix}
\begin{bmatrix}
A[k]^{\frac{1}{2}}Qx[k] \\ A[k]^{\frac{1}{2}}q
\end{bmatrix}\leq 0,
\end{align*}
and
\begin{align*}
\begin{bmatrix}
A[k]^{\frac{1}{2}}Qx[k] \\ A[k]^{\frac{1}{2}}q
\end{bmatrix}^\top
\begin{bmatrix}
I \\ I
\end{bmatrix}
\bigg((1-\epsilon_2)I- A[k]^{\frac{1}{2}}QA[k]^{\frac{1}{2}}\bigg)
\begin{bmatrix}
I \\ I
\end{bmatrix}^\top
\begin{bmatrix}
A[k]^{\frac{1}{2}}Qx[k] \\ A[k]^{\frac{1}{2}}q
\end{bmatrix}\leq 0.
\end{align*}
These conditions are satisfied if the following inequalities hold
\begin{subequations}
\begin{align}
\begin{bmatrix}
(\epsilon_1-2)I & 
\dfrac{2\epsilon_1-3}{2} I \\ 
\dfrac{2\epsilon_1-3}{2} I &
(\epsilon_1-1)I 
\end{bmatrix}
+
 \begin{bmatrix}
I \\ I
\end{bmatrix}(A[k]^{\frac{1}{2}}QA[k]^{\frac{1}{2}})\begin{bmatrix}
I \\ I
\end{bmatrix}^\top\leq 0,
\label{eqn:appendixA:1}
\\
(1-\epsilon_2)I- A[k]^{\frac{1}{2}}QA[k]^{\frac{1}{2}}\leq 0.
\label{eqn:appendixA:2}
\end{align}
\end{subequations}
For~\eqref{eqn:appendixA:1} to hold, it is sufficient to satisfy
\begin{align} \label{eqn:appendixA:3}
\begin{bmatrix}
(\epsilon_1-2)I & 
\dfrac{2\epsilon_1-3}{2} I \\ 
\dfrac{2\epsilon_1-3}{2} I &
(\epsilon_1-1)I 
\end{bmatrix}
+
c_2\lambda_{\max}(Q) \begin{bmatrix}
I & I \\
I & I
\end{bmatrix}\leq 0,
\end{align}
where $\lambda_{\max}(Q)$ denotes the largest eigenvalue of $Q$. Using Schur's complement, we can translate~\eqref{eqn:appendixA:3} to
\begin{align*}
\epsilon_1-2+c_2\lambda_{\max}(Q) <0,\\
\bigg(\epsilon_1-1+c_2\lambda_{\max}(Q) \bigg)\bigg(\epsilon_1-2+c_2\lambda_{\max}(Q)  \bigg)-\bigg(\epsilon_1-\dfrac{3}{2}+c_2\lambda_{\max}(Q) \bigg)^2<0.
\end{align*}
This holds if $\epsilon_1-2+c_2\lambda_{\max}(Q) <0$. For~\eqref{eqn:appendixA:2} to hold, it is sufficient to have
$1-\epsilon_2- c_1\lambda_{\min}(Q)\leq 0$, where $\lambda_{\min}(Q)$ denotes the smallest eigenvalue of $Q$. Therefore, for the gradient algorithm to converge, it suffices to select $ c_1>(1-\epsilon_2)/\lambda_{\min}(Q)$ and $ c_2<(2-\epsilon_2)/\lambda_{\max}(Q)$ for $0<\epsilon_1<\epsilon_2<1$.
}

\end{document}